\documentclass[11pt]{article}
\usepackage[margin=1in]{geometry}
\usepackage{amsfonts}
\usepackage{hyperref}
\usepackage{amsmath,amsthm}
\usepackage{graphicx}
\usepackage{array}
\usepackage{float}
\usepackage{mathrsfs}
\usepackage{xcolor}
\usepackage{filecontents}
\usepackage{hyperref}
\author{Curtis McDonald}
\title{Technical Note on Relating Scores of Tilted Distributions}
\author{Curtis McDonald}

\theoremstyle{proposition}

\theoremstyle{lemma}

\theoremstyle{corollary}
\newtheorem{corollary}{Corollary}

\theoremstyle{theorem}
\newtheorem{theorem}{Theorem}

\theoremstyle{definition}
\newtheorem{definition}{Definition}

\begin{document}

\maketitle

\begin{abstract}
Recent results in \cite[Lemma 3.3]{moitra2026steering} have shown that for a linear tilt to a reference measure, the scores that would be produced under convolution with a normal variable can be expressed in terms of convolutions of the original density. Here, we extend that result to include constant negative diagonal tilts as well. The relationship follows from relating the denoisers of the two densities, which define the scores via Tweedie formula. A linear tilt results in a location shift to the score operator, while a quadratic tilt results in both a location shift and a time shift. Thus the scores of the tilted density can be understood as the scores of the original convolution process at a different location and noise level. These results are of interest to those in the score based diffusion community, and may lead to better score estimators which take advantage of these tilted score relationships.
\end{abstract}

\section{Introduction}

A core problem that has emerged in modern sampling algorithms has become estimating the score of a convolution of a target density with a normal random variable. Scores produced via convolution with a normal random variable are the natural scores of densities produced under the forward flow Ornstein-Uhlenbeck (OU) process, and estimating these scores is a necessary module to implement the reverse score based diffusion model sampling procedure. We point the reader to seminal work in \cite{song2019generative,song2020score}, a collective overview \cite{yang2023diffusion}, and recent advances in score estimation in \cite{he2024zerothorder},\cite{young2026diffusion} for background on the specific diffusion process. In this work, we will be mostly interested in the problem of score estimation itself, which is a required subroutine to implement these diffusion based sampling methods.

Specifically, define a target density $p(x) \propto e^{-V(x)}$ absolutely continuous with respect to Lebesgue measure in $\mathbb{R}^{d}$ known up to normalization. For any value $\sigma \in [0,1]$, define a new random variable via convolution with an independent normal random variable,
\begin{align}
    U_{\sigma} &= \sqrt{1-\sigma^{2}}X+\sigma Z&X \sim P, Z \sim N(0,I)
\end{align}
$U_{\sigma}$ now has its own density $q(u,\sigma)$ (we may also write $q_{\sigma}(u)$ where appropriate) defined via convolution, and the score of this density is known via Tweedie formula \cite{robbins1956empirical,miyasawa1961empirical,roberts1996exponential,efron2011tweedie} as an expectation over the reverse conditional density.
\begin{align}
    \nabla \log q(u,\sigma) = E_{q}[\frac{u - \sqrt{1-\sigma^{2}}x}{\sigma^{2}}|u,\sigma]
\end{align}
The ability to estimate $\nabla \log q(u,\sigma)$ to sufficient accuracy is the core module that drives score based diffusion models. A very interesting result was recently pointed out in \cite{moitra2026steering} relating the scores of tilted densities. Define $\tilde{p}$ as a linear tilt to the original measure $p$,
\begin{align}
    \tilde{p} &\propto p(x)e^{ v^{T}x}\quad\quad~s.t.~\quad\quad \int p(x)e^{ v^{T}x} dx < \infty
\end{align}
Then define a new variable under convolution with $\tilde{p}$ and a normal,
\begin{align}
    W_{\gamma} &= \sqrt{1-\gamma^{2}}\tilde{X}+\gamma Z&\tilde{X} \sim \tilde{P}, Z \sim N(0,I)
\end{align}
Then $W_{\gamma}$ has its own density $g(w,\gamma)$. If one wants to sample $\tilde{p}$ via a score based diffusion model, one needs an accurate estimator of the scores $\nabla \log g(w,\gamma)$. In \cite[Lemma 3.3]{moitra2026steering}, it is shown the scores of $g(w,\gamma)$ are exactly the scores of $q(u,\sigma)$ under a location shift and affine shift,
\begin{align}
    \nabla \log g(w,\gamma) = \frac{1}{\sqrt{1-\gamma^{2}}}v+\nabla \log q(w + \frac{\gamma^{2}}{\sqrt{1-\gamma^{2}}}v, \gamma)
\end{align}
If we define ``location shift''
\begin{align}
    w' = w+\frac{\gamma^{2}}{\sqrt{1-\gamma^{2}}}v
\end{align}
Then this relationship can be expressed as
\begin{align}
    \nabla \log g(w,\gamma)&= \frac{1}{\gamma^{2}}(w'-w)+\nabla \log q(w', \gamma)
\end{align}
The score of the tilt at a location $w$ can be found by querying the original score operator at a location $w'$.

Here we extend this result to include quadratic terms. Define a constant negative diagonal tilt as for any $s \geq 0$ and any vector $v$,
\begin{align}
p^{*}(x) = \frac{p(x)e^{-\frac{1}{2}s\|x\|^{2}+v^{T}x}}{E_{p}[e^{-\frac{1}{2}s\|x\|^{2}+v^{T}x}]}\label{tilt_eqn}
\end{align} 
Much like with linear tilts, the scores which would be produced under convolution with $p^{*}$ and a normal density are the same scores that would be produced under convolution with $p$, now with both a location shift $w'$ and a time shift $\sigma'$.

\section{Score Relationship}
Consider a reference measure $p(x)$. Define a new variable $U$ for any $\sigma \in [0,1]$
\begin{align}
U = \sqrt{1-\sigma^{2}}X+\sigma Z, Z \sim N(0,I)
\end{align}
or, equivalently, conditional measure
\begin{align}
q(u|x,\sigma) \sim N(\sqrt{1-\sigma^{2}}x, \sigma^{2}I)
\end{align}
Then $X$ and $U$ share the joint density at a fixed $\sigma$,
\begin{align}
q_{\sigma}(x,u) = p(x)q(u|x,\sigma)&\propto p(x)e^{-\frac{1}{2\sigma^{2}}\|u - \sqrt{1-\sigma^{2}}x\|^{2}}
\end{align}
and we have the reverse conditional density which can be written as
\begin{align}
q(x|u,\sigma)&\propto p(x)e^{\frac{\sqrt{1-\sigma^{2}}}{\sigma^{2}}u^{T}x- \frac{1-\sigma^{2}}{2 \sigma^{2}}\|x\|^{2}}\label{rev_con_density}
\end{align}
this is both a linear tilt and constant negative diagonal tilt to $p(x)$.

Define the marginal density of $U$ at a point $\sigma$ as $q(u,\sigma)$. There is a well known expression for the score $\nabla \log q(u, \sigma)$ under Tweedie formula \cite{robbins1956empirical,miyasawa1961empirical,roberts1996exponential,efron2011tweedie}
\begin{theorem}[Tweedie Formula]
\begin{align}
\nabla\log q(u, \sigma)&= E_{q}[- \frac{u - \sqrt{1-\sigma^{2}}x}{\sigma^{2}}|u]=-\frac{1}{\sigma^{2}} u + \frac{\sqrt{1-\sigma^{2}}}{\sigma^{2}} E_{q}[x|u,\sigma]
\end{align}
\end{theorem}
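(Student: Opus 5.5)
The plan is to differentiate the marginal density under the integral sign and recognise the result as an expectation against the reverse conditional density \eqref{rev_con_density}. The starting point is the marginal
\begin{align}
q(u,\sigma) = \int p(x)\, q(u|x,\sigma)\, dx, \qquad q(u|x,\sigma) = (2\pi\sigma^{2})^{-d/2} e^{-\frac{1}{2\sigma^{2}}\|u-\sqrt{1-\sigma^{2}}x\|^{2}},
\end{align}
where $\sigma\in(0,1]$ is fixed. The case $\sigma=0$ is excluded, since then $U=X$ and the formula is not defined.

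\textbf{Gradient of the Gaussian kernel.} The only object depending on $u$ is the kernel, and it satisfies
\begin{align}
\nabla_{u} q(u|x,\sigma) = -\frac{u-\sqrt{1-\sigma^{2}}x}{\sigma^{2}}\, q(u|x,\sigma).
\end{align}
This is exactly the expression inside the conditional expectation in the statement.

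\textbf{Differentiating under the integral.} Next I would justify exchanging $\nabla_u$ with $\int \cdot\, dx$. This is the one step requiring care, because $p$ is only assumed to be a probability density, with no moment conditions. I would handle it with dominated convergence, working locally in $u$. For $u$ in a ball $B$ of radius $1$ around a fixed $u_0$, the integrand satisfies
\begin{align}
\|\nabla_u q(u|x,\sigma)\|\,p(x) \le C\,(1+\|x\|)\, e^{-\frac{1}{2\sigma^{2}}\|u-\sqrt{1-\sigma^{2}}x\|^{2}}\, p(x).
\end{align}
The Gaussian factor decays like $e^{-c\|x\|^{2}}$ uniformly over $u\in B$, so it dominates the linear growth $(1+\|x\|)$. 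Hence the right-hand side is bounded by $C' p(x)$, which is integrable. The same bound shows that $q(u,\sigma)>0$ and that $q(\cdot,\sigma)$ is smooth, so $\log q$ is well defined. With the exchange justified,
\begin{align}
\nabla \log q(u,\sigma) = \frac{\int -\frac{u-\sqrt{1-\sigma^{2}}x}{\sigma^{2}}\, p(x)\, q(u|x,\sigma)\, dx}{\int p(x)\, q(u|x,\sigma)\, dx}.
\end{align}

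\textbf{Identifying the conditional expectation.} By Bayes' rule, $p(x)q(u|x,\sigma)/q(u,\sigma) = q(x|u,\sigma)$, which is the reverse conditional density \eqref{rev_con_density}. The ratio above is therefore $E_q\!\left[-\frac{u-\sqrt{1-\sigma^{2}}x}{\sigma^{2}}\,\middle|\,u,\sigma\right]$, which is the first equality. The second equality follows from linearity of conditional expectation, since $u$ is constant under the conditioning. This gives
\begin{align}
\nabla \log q(u,\sigma) = -\frac{1}{\sigma^{2}}u + \frac{\sqrt{1-\sigma^{2}}}{\sigma^{2}}E_q[x|u,\sigma].
\end{align}
For this term to make sense, $E_q[x|u,\sigma]$ must be finite. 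That follows from the same Gaussian-tail domination used in the interchange step.
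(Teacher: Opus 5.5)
The paper does not prove this statement; it quotes Tweedie's formula as classical and cites Robbins, Miyasawa and Efron. Your argument is the standard proof from that literature: you differentiate the Gaussian kernel under the integral, then identify $p(x)q(u|x,\sigma)/q(u,\sigma)$ with the reverse conditional density \eqref{rev_con_density}. For $\sigma\in(0,1)$ it is correct, including the domination step and the finiteness of $E_q[x|u,\sigma]$.

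\textbf{The endpoint $\sigma=1$.} This is the one real error, and you explicitly include this case. There $\sqrt{1-\sigma^{2}}=0$, so the Gaussian factor equals $e^{-\frac12\|u\|^{2}}$ and does not decay in $x$ at all. The decay constant you can extract, $c=\frac{1-\sigma^{2}}{4\sigma^{2}}$, vanishes. This has two consequences.
\begin{itemize}
\item Your dominating function $C(1+\|x\|)p(x)$ need not be integrable. The interchange is still valid, but only because the actual gradient $-u\,q(u|x,1)$ does not depend on $x$.
\item More importantly, $U=Z$ is independent of $X$ at $\sigma=1$, so $E_q[x|u,1]=E_p[X]$. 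This is infinite or undefined when $p$ has no first moment, e.g.\ a Cauchy density. So your closing claim, that finiteness follows from the Gaussian-tail domination, is false there.
\end{itemize}
The first equality still holds at $\sigma=1$, since both sides equal $-u$. The second needs either $E_p\|X\|<\infty$ or the convention that $\frac{\sqrt{1-\sigma^{2}}}{\sigma^{2}}E_q[x|u,\sigma]$ is read as $0$. Either restrict to $\sigma\in(0,1)$ or add that assumption at the endpoint.

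\textbf{Two smaller points.} First, positivity of $q(u,\sigma)$ does not come from an upper bound. It comes from strict positivity of the Gaussian kernel together with $\int p=1$.

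Second, you can skip the localisation in $u$ entirely. Since $r e^{-r^{2}/(2\sigma^{2})}\le \sigma e^{-1/2}$ for $r\ge 0$, you get $\|\nabla_u q(u|x,\sigma)\|\le (2\pi\sigma^{2})^{-d/2}\sigma^{-1}e^{-1/2}$ for all $u,x$ and every $\sigma\in(0,1]$. The integrand is then dominated by a constant multiple of $p(x)$ uniformly, endpoint included.
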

An important object is called the denoiser, 
\begin{definition}
The denoiser is defined as the reverse conditional expression for $x|u$ at a given $\sigma$.
\begin{align}
    F[u,\sigma] = E_{q}[x|u,\sigma]
\end{align}
\end{definition}
Via Tweedie formula, the score is an affine map of the denoiser. Using the form of the reverse conditional density in \eqref{rev_con_density}, the denoiser is the expectation of $p(x)$ under a specific linear and constant negative diagonal tilt.
\begin{align}
F[u,\sigma]&=\int x  \frac{p(x)e^{\frac{\sqrt{1-\sigma^{2}}}{\sigma^{2}}u^{T}x- \frac{1}{2}(\frac{1}{\sigma}^{2}-1)\|x\|^{2}}}{E_{p}[e^{\frac{\sqrt{1-\sigma^{2}}}{\sigma^{2}}u^{T}x- \frac{1}{2}(\frac{1}{\sigma}^{2}-1)\|x\|^{2}}]}dx
\end{align}
Thus it should be no surprise if $p^{*}$ is itself a linear tilt and a constant negative diagonal tilt to $p$ its denoiser is the denoiser of $p$ under a time and location shift.
\begin{theorem}\label{main_denoiser_thm}
Let $p(x)$ have denoiser $F[u,\sigma]$. Let $p^{*}(x)$ be a linear tilt by a vector $v$ to $p$ and a constant negative diagonal tilt with scaling $s \geq 0$,
\begin{align}
p^{*}(x)&= \frac{p(x)e^{v^{T}x - \frac{1}{2}s \|x\|^{2}}}{E_{p}[e^{v^{T}x - \frac{1}{2}s \|x\|^{2}}]} \label{const_neg_daig_tilt}
\end{align}
Assume $X' \sim p^{*}$ and for any $\sigma \in [0,1]$ define random variable
\begin{align}
U' = \sqrt{1-\sigma^{2}}X'+\sigma Z, Z \sim N(0,I)
\end{align}
Denote the denoiser for $p^{*}(x)$ as $G[u,\sigma]$. Then the denoiser for $p^{*}$ is the denoiser for $p$ evaluated at an affine mapping of the inputs. Define the time shift and location shift,
\begin{align}
&u'=\frac{\sigma^{2}}{\sqrt{(1-\sigma^{2}+s \sigma^{2})(1+s \sigma^{2})}} v+ \sqrt{\frac{1-\sigma^{2}}{(1-\sigma^{2}+s \sigma^{2})(1+s \sigma^{2})}} u \label{u_prime}\\ 
&\sigma'=\sqrt{\frac{\sigma^{2}}{1+s \sigma^{2}}} \label{sigma_prime}
\end{align}
Then the denoiser for $p^{*}$ is the denoiser for $p$ evaluated at these new values
\begin{align}
G[u,\sigma]= F[u', \sigma']
\end{align}
\end{theorem}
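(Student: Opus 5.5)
The plan is a direct comparison of reverse conditional densities. It uses no machinery beyond the explicit form \eqref{rev_con_density} and the definition of the denoiser as a conditional mean.

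First, I will write the reverse conditional density for the pair $(X',U')$ at noise level $\sigma$. Since $X'\sim p^{*}$ and $U'|X'\sim N(\sqrt{1-\sigma^{2}}X',\sigma^{2}I)$, the same computation that produced \eqref{rev_con_density} gives
\begin{align}
q^{*}(x|u,\sigma)\propto p(x)\exp\Big(\big(v+\tfrac{\sqrt{1-\sigma^{2}}}{\sigma^{2}}u\big)^{T}x-\tfrac{1}{2}\big(s+\tfrac{1-\sigma^{2}}{\sigma^{2}}\big)\|x\|^{2}\Big).
\end{align}
The normalizing constant $E_{p}[e^{v^{T}x-\frac12 s\|x\|^{2}}]$ of $p^{*}$ cancels here. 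So $G[u,\sigma]$ is the mean of $p$ under a single linear-plus-quadratic tilt with linear coefficient $a=v+\frac{\sqrt{1-\sigma^{2}}}{\sigma^{2}}u$ and quadratic coefficient $b=\frac{1-\sigma^{2}+s\sigma^{2}}{\sigma^{2}}$.

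Second, I will note from \eqref{rev_con_density} that $F[u',\sigma']$ is the mean of $p$ under the tilt with linear coefficient $\frac{\sqrt{1-\sigma'^{2}}}{\sigma'^{2}}u'$ and quadratic coefficient $\frac{1-\sigma'^{2}}{\sigma'^{2}}=\frac{1}{\sigma'^{2}}-1$. A tilted probability density is determined by its two coefficients once normalized. Therefore it suffices to solve two matching equations, and the solution should be exactly \eqref{u_prime}--\eqref{sigma_prime}.

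\emph{Quadratic coefficient.} Matching requires $\frac{1}{\sigma'^{2}}-1=\frac{1}{\sigma^{2}}-1+s$, i.e.\ $\sigma'^{2}=\frac{\sigma^{2}}{1+s\sigma^{2}}$, which is \eqref{sigma_prime}. Since $s\ge 0$, this gives $\sigma'\in[0,\sigma]\subseteq[0,1]$, so $F[\cdot,\sigma']$ is well defined.

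\emph{Linear coefficient.} From $1-\sigma'^{2}=\frac{1-\sigma^{2}+s\sigma^{2}}{1+s\sigma^{2}}$ one computes
\begin{align}
\frac{\sqrt{1-\sigma'^{2}}}{\sigma'^{2}}=\frac{\sqrt{(1-\sigma^{2}+s\sigma^{2})(1+s\sigma^{2})}}{\sigma^{2}}.
\end{align}
Setting this times $u'$ equal to $a$ and solving gives $u'=\frac{\sigma^{2}}{\sqrt{(1-\sigma^{2}+s\sigma^{2})(1+s\sigma^{2})}}\big(v+\frac{\sqrt{1-\sigma^{2}}}{\sigma^{2}}u\big)$, which expands to \eqref{u_prime}. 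With both coefficients matched, the two reverse conditional densities coincide, hence so do their means, giving $G[u,\sigma]=F[u',\sigma']$.

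The main obstacle is not the algebra but the boundary and integrability bookkeeping. At $\sigma=0$ the reverse density is degenerate, so that case should be handled by the convention $F[u,0]=u$, which is consistent with $\sigma'=0$ and $u'=u$. At $\sigma=1$ with $s=0$, the quadratic coefficient is zero and $u$ drops out. I must check that the formulas still agree there, and that the relevant normalizing integrals are finite; finiteness of $E_{p}[e^{v^{T}x-\frac12 s\|x\|^{2}}]$ is implicitly assumed in the definition of $p^{*}$. For $\sigma\in(0,1)$ the Gaussian factor makes the tilt integrable whenever $p^{*}$ is, so the identity holds there without further conditions.
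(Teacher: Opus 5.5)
Your proposal is correct and is essentially the paper's proof: both absorb the tilt into the reverse conditional density, so that $G[u,\sigma]$ is the mean of $p$ under quadratic coefficient $\rho'=\frac{1}{\sigma^{2}}+s-1$ and linear coefficient $v+\frac{\sqrt{1-\sigma^{2}}}{\sigma^{2}}u$. Both then read off $\sigma'^{2}=1/(\rho'+1)$ and obtain $u'$ by dividing the linear coefficient by $\sqrt{\rho'(\rho'+1)}=\frac{\sqrt{1-\sigma'^{2}}}{\sigma'^{2}}$. The one boundary check you left open actually fails: at $\sigma=1$, $s=0$, $v\neq 0$ the formula for $u'$ divides by zero, and $G[u,1]=E_{p^{*}}[x]$ generally differs from $F[\cdot,1]=E_{p}[x]$, so the identity genuinely requires $\sigma<1$ or $s>0$ (unless $v=0$), a restriction the paper's proof also leaves implicit.
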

\begin{proof}
The proof is an algebra procedure in completing the square and collecting like terms. All terms involved are either linear or quadratic and combine under simple rules. See Appendix.
\end{proof}
This theorem is a statement that the denoiser operator is closed under constant negative diagonal tilts. The denoiser of a density of the form  \eqref{const_neg_daig_tilt} is the same as the denoiser of $p$ evaluated at a location shift $u'$ and time shift $\sigma'$.

The scores, being affine maps of the denoiser via Tweedie formula, results in the score of $p^{*}$ under convolution being the score of $p$ under convolution at a different location and time point, plus an affine map.
\begin{theorem}\label{score_thm}
Using the definition of $u', \sigma'$ in \eqref{u_prime} and \eqref{sigma_prime}, the score of the tilt under convolution can be related back to the score of the original density under convolution
\begin{align}
\nabla \log q^{*}(u,  \sigma)=&-\frac{1}{\sigma^{2}}(u- \sqrt{\frac{1-\sigma^{2}}{1-(\sigma')^{2}}}u')+\frac{(\sigma')^{2}\sqrt{1-\sigma^{2}}}{\sigma^{2}\sqrt{1-(\sigma')^{2}}}\nabla \log q(u', \sigma').\label{score_term_main}
\end{align}
\end{theorem}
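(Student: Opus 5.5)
The plan is to combine Tweedie formula, applied twice, with Theorem \ref{main_denoiser_thm}. First, apply Tweedie formula to the tilted process: $U'$ has marginal $q^{*}(u,\sigma)$ and denoiser $G[u,\sigma]$. This gives
\begin{align*}
\nabla \log q^{*}(u,\sigma) = -\frac{1}{\sigma^{2}}u + \frac{\sqrt{1-\sigma^{2}}}{\sigma^{2}} G[u,\sigma].
\end{align*}
Theorem \ref{main_denoiser_thm} then replaces $G[u,\sigma]$ by $F[u',\sigma']$, with $u'$ and $\sigma'$ as in \eqref{u_prime} and \eqref{sigma_prime}.

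Next, I would invert Tweedie formula for the original density at the shifted point $(u',\sigma')$. For $\sigma' \in (0,1)$, solving for the denoiser gives
\begin{align*}
F[u',\sigma'] = \frac{(\sigma')^{2}}{\sqrt{1-(\sigma')^{2}}}\Big(\nabla \log q(u',\sigma') + \frac{1}{(\sigma')^{2}}u'\Big) = \frac{u'}{\sqrt{1-(\sigma')^{2}}} + \frac{(\sigma')^{2}}{\sqrt{1-(\sigma')^{2}}}\nabla \log q(u',\sigma').
\end{align*}
Substituting this into the first display, the $u'$ term carries the coefficient $\frac{\sqrt{1-\sigma^{2}}}{\sigma^{2}\sqrt{1-(\sigma')^{2}}}$. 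Grouping it with $-\frac{1}{\sigma^{2}}u$ gives $-\frac{1}{\sigma^{2}}\big(u - \sqrt{\tfrac{1-\sigma^{2}}{1-(\sigma')^{2}}}\,u'\big)$. The score term carries the coefficient $\frac{(\sigma')^{2}\sqrt{1-\sigma^{2}}}{\sigma^{2}\sqrt{1-(\sigma')^{2}}}$. Together these are exactly \eqref{score_term_main}.

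The only genuine obstacle is the range of validity. The inversion needs $\sigma'\in(0,1)$ so that we divide by neither $\sigma'$ nor $\sqrt{1-(\sigma')^{2}}$.
\begin{itemize}
\item Since $s\ge 0$, we have $\sigma'\le\sigma$, and $\sigma'=1$ forces $\sigma=1$ and $s=0$.
\item So the statement is meant for $\sigma\in(0,1)$. There $1-(\sigma')^{2} = \frac{1-\sigma^{2}+s\sigma^{2}}{1+s\sigma^{2}}>0$ and $\sigma'>0$.
\item At the endpoints, $\sigma=0$ or $\sigma=1$, the score is undefined or trivial, and I would simply exclude them.
\end{itemize}
I would also note that $q^{*}$ and $q$ are differentiable with Tweedie formula valid. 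This follows from the finiteness of the normalizer $E_p[e^{v^{T}x-\frac{1}{2}s\|x\|^{2}}]$, as assumed in \eqref{const_neg_daig_tilt}, together with the smoothing by the Gaussian.
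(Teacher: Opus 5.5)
Your proposal is correct and follows essentially the same route as the paper: apply Tweedie to $q^{*}$, substitute $G[u,\sigma]=F[u',\sigma']$ from Theorem~\ref{main_denoiser_thm}, then rewrite $F[u',\sigma']$ through Tweedie for $q$ at $(u',\sigma')$. The paper's ``multiply and divide, then add and subtract $\frac{1}{(\sigma')^{2}}u'$'' step is exactly your inversion, and your explicit check that $\sigma'\in(0,1)$ whenever $\sigma\in(0,1)$ is care the paper leaves implicit.
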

\begin{proof}
Using Tweedie formula combined with Theorem \ref{main_denoiser_thm}, gives the expression for the scores once the denoisers are related. See Appendix.
\end{proof}
We refer to the value $u'$ as a location shift in the score, as we are evaluating the previous score at a different spatial location. We refer to the $\sigma'$ value as a time shift, as this is the variance that would be produced at a different time point in an OU process.

It may seem that either when $\sigma = 0,1$, certain quantities in \eqref{score_term_main} are dividing by zero. However, by rearranging the expression that is not the case, it is a well defined equation for all $\sigma$ even at 0 and 1.
\begin{corollary}\label{clean_form}
    With linear tilt vector $v$, quadratic tilt $s \geq 0$, and $u',\sigma'$ as before, the score relationship in \eqref{score_term_main} can also be written as
    \begin{align}
        \nabla \log q^{*}(u,\sigma)&=  v \frac{\sqrt{1-\sigma^{2}}}{1-\sigma^{2}+s\sigma^{2}}-u (\frac{s}{1-\sigma^{2}+s \sigma^{2}})+\frac{1}{\sqrt{1+s \sigma^{2}}}\frac{\sqrt{1-\sigma^{2}}}{\sqrt{1+s \sigma^{2}-\sigma^{2}}}\nabla \log q(u',\sigma')
    \end{align}
    which has no singularities when $\sigma = 0$ or $\sigma = 1$.
\end{corollary}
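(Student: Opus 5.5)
The plan is a direct algebraic rewriting of \eqref{score_term_main} from Theorem \ref{score_thm}. I will substitute the definitions \eqref{u_prime} and \eqref{sigma_prime} and cancel every factor of $\sigma^{2}$ and $\sqrt{1-(\sigma')^{2}}$ that sits in a denominator. Throughout I abbreviate $D = 1-\sigma^{2}+s\sigma^{2}$. The first step is to compute
\begin{align}
1-(\sigma')^{2} = \frac{1+s\sigma^{2}-\sigma^{2}}{1+s\sigma^{2}} = \frac{D}{1+s\sigma^{2}},
\end{align}
which gives
\begin{align}
\sqrt{\frac{1-\sigma^{2}}{1-(\sigma')^{2}}} = \sqrt{\frac{(1-\sigma^{2})(1+s\sigma^{2})}{D}}.
\end{align}

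Next I multiply this factor into $u'$ from \eqref{u_prime}. The factor $\sqrt{(1+s\sigma^{2})/D}$ cancels against the denominator $\sqrt{D(1+s\sigma^{2})}$ in $u'$, giving
\begin{align}
\sqrt{\frac{1-\sigma^{2}}{1-(\sigma')^{2}}}\,u' = \frac{\sigma^{2}\sqrt{1-\sigma^{2}}}{D}v + \frac{1-\sigma^{2}}{D}u .
\end{align}
Inserting this into the first term of \eqref{score_term_main}, the explicit $\sigma^{2}$ in the $v$ coefficient cancels the prefactor $1/\sigma^{2}$, leaving $v\sqrt{1-\sigma^{2}}/D$. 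The $u$ coefficient becomes
\begin{align}
\frac{1}{\sigma^{2}}\left(\frac{1-\sigma^{2}}{D}-1\right) = \frac{1-\sigma^{2}-D}{\sigma^{2}D} = -\frac{s}{D},
\end{align}
since $1-\sigma^{2}-D = -s\sigma^{2}$. This is the key cancellation that removes the apparent $1/\sigma^{2}$ singularity at $\sigma=0$.

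For the coefficient of the score term, I use $(\sigma')^{2}/\sigma^{2} = 1/(1+s\sigma^{2})$. Then $\sqrt{1-\sigma^{2}}/\sqrt{1-(\sigma')^{2}} = \sqrt{1-\sigma^{2}}\sqrt{1+s\sigma^{2}}/\sqrt{D}$, and the product simplifies to
\begin{align}
\frac{\sqrt{1-\sigma^{2}}}{\sqrt{1+s\sigma^{2}}\sqrt{D}}.
\end{align}
This matches the stated form.

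The last step is to check regularity. Every remaining denominator is a power of $1+s\sigma^{2}\geq 1$ or of $D$. Moreover $D = 1$ at $\sigma=0$ and $D = s$ at $\sigma=1$, so the only genuine obstacle is the boundary case $s=0$, $\sigma=1$. In that case $D=0$, but $\sqrt{1-\sigma^{2}}=0$ as well; this is the pure linear tilt at full noise, and the original relation of \cite{moitra2026steering} is also degenerate there. I expect this boundary bookkeeping to be the only delicate point. I would handle it by stating the no-singularity claim for $D>0$, that is, for $s>0$ or $\sigma<1$, and noting that otherwise the expression is to be read as a limit.
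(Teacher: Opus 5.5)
Your proposal is correct and follows the paper's proof. You substitute $(\sigma')^{2}=\sigma^{2}/(1+s\sigma^{2})$ into \eqref{score_term_main}, then substitute $u'$, and the $1/\sigma^{2}$ disappears via $1-\sigma^{2}-D=-s\sigma^{2}$ with $D=1-\sigma^{2}+s\sigma^{2}$. Your boundary caveat is also right and goes beyond the paper, whose proof never checks regularity: for $s=0$ the coefficient $\sqrt{1-\sigma^{2}}/D=1/\sqrt{1-\sigma^{2}}$ (and $u'$ itself) diverges as $\sigma\to 1$, so the ``no singularities'' claim at $\sigma=1$ genuinely requires $s>0$.
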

\begin{proof}
    See Appendex
\end{proof}

If $s = 0$ and we have a purely linear tilt, this results in no time shift and only a location shift. This recovers the expression
\begin{align}
&\sigma' = \sigma, u' = \frac{\sigma^{2}}{\sqrt{1-\sigma^{2}}}v+u,&\nabla_{u}\log q^{*}(u, \sigma)=\frac{1}{\sqrt{1-\sigma^{2}}}v+\nabla \log q(u+\frac{\sigma^{2}}{\sqrt{1-\sigma^{2}}}v, \sigma)
\end{align}
which is the recent result in \cite[Lemma 3.3]{moitra2026steering} which first noted the connection between linear tilts and scores.
An equivalent form is
\begin{align}
    \nabla_{u}\log q^{*}(u, \sigma)=\frac{1}{\sigma^{2}}(u'-u)+\nabla \log q(u', \sigma)
\end{align}
which is more clearly stated in the difference between evaluation points $u'$ and $u$. Note we consider $s\geq 0$, and the time shift equation is
\begin{align}
    \sigma' = \sqrt{\frac{\sigma^{2}}{1+s\sigma^{2}}}\leq  \sigma
\end{align}
thus the ``equivalent'' $\sigma'$ point is strictly less than the $\sigma$ point for a negative tilt. The mapping takes $\sigma \in [0,1]$ and returns a $\sigma' \in [0, \sqrt{\frac{1}{1+s}}]$.

\bibliography{cdc_2026}
\bibliographystyle{plain}

\appendix
\section{Appendix: Collected Proofs}
\begin{proof}[Proof of Theorem \ref{main_denoiser_thm}]

The original denoiser has a specific structure as a particular linear tilt and constant negative diagonal tilt
\begin{align*}
 F[u,\sigma]&=\int x  \frac{p(x)e^{\frac{\sqrt{1-\sigma^{2}}}{\sigma^{2}}u^{T}x- \frac{1}{2}(\frac{1}{\sigma}^{2}-1)\|x\|^{2}}}{E_{p}[e^{\frac{\sqrt{1-\sigma^{2}}}{\sigma^{2}}u^{T}x- \frac{1}{2}(\frac{1}{\sigma}^{2}-1)\|x\|^{2}}]}dx
\end{align*}
The scaling of the quadratic term is a one to one mapping of $\sigma^{2}$. Each $\sigma^{2} \in [0,1]$ defines a unique scaling of the quadratic term, and vice verse. If we define $\rho = \frac{1}{\sigma^{2}}-1$ we can express the denoiser equivalently as
\begin{align*}
F[u,\sigma]&= \int x \frac{p(x)e^{\sqrt{\rho(\rho+1)}u^{T}x-\frac{1}{2}\rho\|x\|^{2}}}{E_{p}[e^{\sqrt{\rho(\rho+1)}u^{T}x-\frac{1}{2}\rho\|x\|^{2}}]}dx\quad \rho = \frac{1}{\sigma^{2}}-1
\end{align*}
The denoiser of $p^{*}$ is then a linear and constant diagonal tilt to what is already a linear and diagonal tilt to the base density. Collecting terms we have 
\begin{align*}
G[u, \sigma]&= \int x\frac{p(x)e^{(v+ \frac{\sqrt{1-\sigma^{2}}}{\sigma^{2}}u')^{T}x}e^{-\frac{1}{2}(\frac{1}{\sigma^{2}}+s-1)\|x\|^{2}}}{E_{p}[e^{(v+ \frac{\sqrt{1-\sigma^{2}}}{\sigma^{2}}u')^{T}x}e^{-\frac{1}{2}(\frac{1}{\sigma^{2}}+s-1)\|x\|^{2}}]}dx
\end{align*}
we then read off $\rho' = \frac{1}{\sigma^{2}}+s-1$. This identifies the new $\sigma'$ point as $(\sigma'^{2}) = 1/(\rho'+1) =\frac{\sigma^{2}}{1+s\sigma^{2}}$. Multiply and divide the linear term by $\sqrt{(\rho')(\rho'+1)}$ we arrive at the equivalent $u'$ evaluation point given in the theorem.
\end{proof}
\begin{proof}[Proof of Theorem \ref{score_thm}]
The two scores have relationships with their denoisers
\begin{align*}
\nabla \log q^{*}(u,\sigma)&= - \frac{1}{\sigma^{2}}u+\frac{\sqrt{1-\sigma^{2}}}{\sigma^{2}}G[u,\sigma]\\
\nabla \log q(u',\sigma')&= - \frac{1}{(\sigma')^{2}}u'+\frac{\sqrt{1-(\sigma')^{2}}}{(\sigma')^{2}}F[u',\sigma']
\end{align*}
Using Theorem \ref{main_denoiser_thm}, we know the appropriate choice of $u',\sigma'$ such that $G[u,\sigma] = F[u',\sigma']$. Substitute the $F$ denoiser for the $G$ denoiser in the score of $\nabla \log q^{*}(u,\sigma)$,
\begin{align*}
\nabla \log q^{*}_{\sigma}(u)&= -\frac{1}{\sigma^{2}}u+\frac{\sqrt{1-\sigma^{2}}}{\sigma^{2}}G[u,\sigma]\\
&= -\frac{1}{\sigma^{2}}u+\frac{\sqrt{1-\sigma^{2}}}{\sigma^{2}}F[u',\sigma']\\
&= -\frac{1}{\sigma^{2}}u+\frac{(\sigma')^{2}\sqrt{1-\sigma^{2}}}{\sigma^{2}\sqrt{1-(\sigma')^{2}}}\left(\frac{\sqrt{1-(\sigma')^{2}}}{(\sigma')^{2}}F[u',\sigma'] \right)
\end{align*}
Add and subtract $\frac{1}{(\sigma')^{2}}u'$ to relate the denoiser back to its score and complete the relationship
\begin{align*}
\nabla \log q^{*}_{\sigma}(u)&= -\frac{1}{\sigma^{2}}u+\frac{(\sigma')^{2}\sqrt{1-\sigma^{2}}}{\sigma^{2}\sqrt{1-(\sigma')^{2}}}(\pm\frac{1}{(\sigma')^{2}}u'+\frac{\sqrt{1-(\sigma')^{2}}}{(\sigma')^{2}}F[u',\sigma'])\\
&= -\frac{1}{\sigma^{2}}(u- \sqrt{\frac{1-\sigma^{2}}{1-(\sigma')^{2}}}u')+\frac{(\sigma')^{2}\sqrt{1-\sigma^{2}}}{\sigma^{2}\sqrt{1-(\sigma')^{2}}}\nabla \log q_{\sigma'}(u')
\end{align*}

\end{proof}

\begin{proof}[Proof of Corollary \ref{clean_form}]
Recall the relationship
\begin{align}
    (\sigma')^{2}&= \frac{\sigma^{2}}{1+s\sigma^{2}}
\end{align}
Substitute this in for $\sigma^{2}$ in equation \ref{score_term_main} and it simplifies as
\begin{align}
 \nabla \log q^{*}(u,\sigma)&= -\frac{1}{\sigma^{2}}(u- \sqrt{\frac{1-\sigma^{2}}{1- \frac{\sigma^{2}}{1+s \sigma^{2}}}}u')+\frac{1}{1+s \sigma^{2}}\frac{\sqrt{1-\sigma^{2}}}{\sqrt{1-\frac{\sigma^{2}}{1+s \sigma^{2}}}}\nabla \log q(u',\sigma')\\
 \nabla \log q^{*}(u,\sigma)&= -\frac{1}{\sigma^{2}}(u- \sqrt{\frac{(1-\sigma^{2})(1+s\sigma^{2})}{1+s \sigma^{2}- \sigma^{2}}}u')+\frac{1}{\sqrt{1+s \sigma^{2}}}\frac{\sqrt{1-\sigma^{2}}}{\sqrt{1+s \sigma^{2}-\sigma^{2}}}\nabla \log q(u',\sigma')
\end{align}
Then note the definition of $u'$
\begin{align}
    u'=\frac{\sigma^{2}}{\sqrt{(1-\sigma^{2}+s \sigma^{2})(1+s \sigma^{2})}} v+ \sqrt{\frac{1-\sigma^{2}}{(1-\sigma^{2}+s \sigma^{2})(1+s \sigma^{2})}} u
\end{align}
Substitute this in for $u'$ and we have
\begin{align}
     \nabla \log q^{*}(u,\sigma)&=v \frac{\sqrt{1-\sigma^{2}}}{1-\sigma^{2}+s\sigma^{2}}-\frac{1}{\sigma^{2}}u (\frac{s \sigma^{2}}{1-\sigma^{2}+s \sigma^{2}})+\frac{1}{\sqrt{1+s \sigma^{2}}}\frac{\sqrt{1-\sigma^{2}}}{\sqrt{1+s \sigma^{2}-\sigma^{2}}}\nabla \log q(u',\sigma') \\
 &=  v \frac{\sqrt{1-\sigma^{2}}}{1-\sigma^{2}+s\sigma^{2}}-u (\frac{s}{1-\sigma^{2}+s \sigma^{2}})+\frac{1}{\sqrt{1+s \sigma^{2}}}\frac{\sqrt{1-\sigma^{2}}}{\sqrt{1+s \sigma^{2}-\sigma^{2}}}\nabla \log q(u',\sigma')
\end{align}
\end{proof}

\end{document}